  \theoremstyle{definition}
  \newtheorem{defn}{\protect\definitionname}
\theoremstyle{plain}
\newtheorem{thm}{\protect\theoremname}
\let\originalleft\left
\let\originalright\right
\renewcommand{\left}{\mathopen{}\mathclose\bgroup\originalleft}
\renewcommand{\right}{\aftergroup\egroup\originalright}
  \providecommand{\definitionname}{Definition}
\providecommand{\theoremname}{Theorem}
\begin{document}

\title[Pi Theorem Revisited]{The Pi Theorem Revisited: \\
On Representations of Quantity Functions}

\author{Dan Jonsson}

\address{Dan Jonsson, University of Gothenburg, Gothenburg, Sweden }

\email{dan.jonsson@gu.se}
\begin{abstract}
This note states and proves a representation theorem for coregular
quantity functions, based on the theory of quantity spaces \cite{key-7},
thereby giving a new perspective on dimensional analysis and the classical
$\pi$ theorem.
\end{abstract}

\maketitle
The central theorem in dimensional analysis is the so-called $\pi$
theorem, with a long history featuring contributions by Fourier \cite{key-4},
Vaschy \cite{key-8}, Federman \cite{key-3}, Buckingham \cite{key-2}
and others. The $\pi$ theorem shows how to represent a ''physically
meaningful'' equation $y=\phi\left(x_{1},\ldots,x_{n}\right)$, describing
a relationship among quantities, as a more informative equation of
the form $y=\prod_{i=1}^{r}\!\xi{}_{i}^{c_{i}}\,\psi\left(\xi_{r+1}',\ldots,\xi_{n}'\right)$,
where $\left(\xi_{1},\ldots,\xi_{n}\right)$ is a permutation of $\left(x_{1},\ldots,x_{n}\right)$,
$c_{i}$ are integers and $\xi_{i}'$ depends on $\xi_{i}$ and $\xi_{1},\ldots,\xi_{r}$.
Following the development of quantity calculus \cite{key-1}, some
quantity calculus versions of the $\pi$ theorem have also been proposed
(see \cite{key-7} for references). This note presents a representation
theorem for a general class of quantity functions, based on the theory
of quantity spaces. Specifically, quantity functions that are ''coregular''
\textendash{} a natural, not too restrictive condition \textendash{}
have useful representations of the form described in Theorem \ref{prop:pi}.

For the sake of completeness, let us briefly review some elements
of the theory of quantity spaces \cite{key-5,key-7}. A \emph{scalable
monoid} over a ring $R$ is a monoid $Q$ equipped with an $R$-action
\[
\cdot:R\times Q\rightarrow Q,\qquad\left(\alpha,x\right)\mapsto\alpha\cdot x,
\]
such that for any $\alpha,\beta\in R$ and $x,y\in Q$ we have (1)
$1\cdot x=x$, (2) $\alpha\cdot\left(\beta\cdot x\right)=\alpha\beta\cdot x$,
and (3) $\alpha\cdot xy=\left(\alpha\cdot x\right)y=x\left(\alpha\cdot y\right)$;
as a consequence, $\left(\alpha\cdot x\right)\left(\beta\cdot y\right)=\alpha\beta\cdot xy$.
We denote the identity element of $Q$ by $1_{\!Q}$, and set $x^{0}=1_{\!Q}$
for any $x\in Q$. An element $x\in Q$ may have an inverse $x^{-1}\in Q$
such that $xx^{-1}=x^{-1}x=1_{\!Q}$.

A (strong) finite \emph{basis} for a scalable monoid $Q$ is a set
$\mathscr{E}=\left\{ e_{1},\ldots,e_{r}\right\} $ of invertible elements
of $Q$ such that every $x\in Q$ has a unique expansion 
\[
x=\mu_{\mathscr{E}}\left(x\right)\cdot\prod_{i=1}^{r}\nolimits e_{i}^{c_{i}},
\]
where $\mu_{\mathscr{E}}\left(x\right)\in R$ and $c_{i}$ are integers.
A finitely generated \emph{quantity space} is a commutative scalable
monoid $Q$ over a field $K$, such that there exists a finite basis
for $Q$. The elements of a quantity space are called \emph{quantities}.
We may think of $\mu_{\mathscr{E}}\left(x\right)$ as the measure
of $x$ relative to $\prod_{i=1}^{r}\nolimits e_{i}^{c_{i}}$, and
indirectly the base units in $\mathscr{E}$.

The relation $\sim$ on $Q$ defined by $x\sim y$ if and only if
$\alpha\cdot x=\beta\cdot y$ for some $\alpha,\beta\in K$ is a congruence
on $Q$. The corresponding equivalence classes are called \emph{dimensions};
$\left[x\right]$ is the dimension that contains $x$. We have $\left[\lambda\cdot x\right]=\left[x\right]$
for any $\lambda\in K$. $Q/{\sim}$ denotes the set of all dimensions
in $Q$; this is a free abelian group with multiplication defined
by $\left[x\right]\left[y\right]=\left[xy\right]$. The identity element
in $Q/{\sim}$ is $\left[1_{Q}\right]$.

In every dimension $\mathsf{C}\in Q/{\sim}$ there is a unique \emph{zero}
\emph{quantity} $0_{\mathsf{C}}$ such that $0\cdot0_{\mathsf{C}}=0_{\mathsf{C}}$
and $0_{\mathsf{C}}\neq1_{Q}$. While $0_{\mathsf{C}}x=0_{\mathsf{C}\left[x\right]}$
for any $x\in Q$, the product of non-zero quantities is a non-zero
quantity. A quantity is invertible if and only if it is non-zero,
and any non-zero quantity $u\in\mathsf{C}$ is a \emph{unit quantity}
for $\mathsf{C}$, meaning that for every $q\in\mathsf{C}$ there
is a unique $\mu\in K$ such that $q=\mu\cdot u$, where $\mu\neq0$
if and only if $q$ is non-zero.

A (dimensional) \emph{quantity function} on $Q$ is a function $\Phi:\mathsf{C}_{1}\times\cdots\times\mathsf{C}_{n}\rightarrow\mathsf{C}_{0}$,
where $\mathsf{C}_{i}\in Q/{\sim}$ for $i=0,1,\ldots,n$. One can
define products and inverses of quantity functions in the usual way. 

We need somewhat more flexible definitions of a covariant scalar representation
and a covariantly representable quantity function than those given
in \cite{key-5}.
\begin{defn}
A \emph{regular} quantity function is a quantity function $\Phi\!:\!\mathsf{C}_{1}\!\times\!\cdots\!\times\!\mathsf{C}_{n}\!\rightarrow\!\mathsf{C}_{0}$
such that there exists, for each $i=0,1,\ldots,n$, a unique tuple
$\left(\mathcal{P}{}_{i1},\ldots,\mathcal{P}_{ir}\right)$ of integers
such that $\mathsf{C}_{i}=\prod_{j=1}^{r}\nolimits\!\mathsf{C}_{j}^{\mathcal{P}_{ij}}$.
A \emph{covariantly representable} regular quantity function on a
quantity space $Q$ over $K$ is a regular quantity function $\Phi$
such that there exists a \emph{covariant scalar representation} of
$\Phi$, that is, a function $\upphi:K^{n}\rightarrow K$ such that
if $E=\left(e_{1},\ldots,e_{r}\right)\in\mathsf{C}_{1}\times\cdots\times\mathsf{C}_{r}$
is a tuple of quantities such that, for $i=0,1,\ldots,n$, each $q_{i}\in\mathsf{C}_{i}$
has a unique expansion 
\[
q_{i}=\mu_{E}\left(q_{i}\right)\cdot\prod_{j=1}^{r}\nolimits e{}_{j}^{\mathcal{P}_{ij}}
\]
then we have
\[
\mu_{E}\left(\Phi\left(q_{1},\ldots,q_{n}\right)\right)=\upphi\left(\mu_{E}\left(q_{1}\right),\ldots,\mu_{E}\left(q_{n}\right)\right)
\]
for any $q_{1},\ldots,q_{n}$. We allow $r=0$ and set $\prod_{j=1}^{0}\mathsf{C}_{j}^{\mathcal{P}_{ij}}=\left[1_{Q}\right]$,
$\prod_{j=1}^{0}e{}_{j}^{\mathcal{P}_{ij}}=1_{Q}$ for $i=0,1,\ldots,n$.
\end{defn}
The idea motivating this definition is that a quantity function $\Phi:\left(q_{1},...,q_{n}\right)\mapsto q_{0}$
can be represented by a scalar function $\upphi:\left(\mu_{E}\left(q_{1}\right),\ldots,\mu_{E}\left(q_{n}\right)\right)\mapsto\mu_{E}\left(q_{0}\right)$
only if $\upphi$ does not depend on $E$, as $\Phi$ does not depend
on $E$ \textendash{} a ''physically meaningful'' equation cannot
depend on the choice of units of measurement. 

For brevity, we can call a covariantly representable regular quantity
function a \emph{coregular} quantity function. Theorem \ref{prop:pi}
concerns coregular quantity functions.
\begin{thm}
\label{prop:pi}Let $Q$ be a finitely generated quantity space over
$K$, and let $\Phi$ be a covariantly representable regular quantity
function
\begin{equation}
\Phi:\mathsf{C}_{1}\times\cdots\times\mathsf{C}_{n}\rightarrow\mathsf{C}_{0},\qquad\left(x_{1},\ldots,x_{r},y_{1},\ldots,y_{n-r}\right)\mapsto y_{0},\label{eq:regular}
\end{equation}
where $\mathsf{C}_{0},\mathsf{C}_{1},\ldots,\mathsf{C}_{n}\in Q/{\sim}$,
such that there exists, for each $i=0,1,\ldots,n$, a unique tuple
$\left(\mathcal{P}{}_{i1},\ldots,\mathcal{P}_{ir}\right)$ of integers
such that 
\begin{equation}
\mathsf{C}_{i}=\prod_{j=1}^{r}\nolimits\!\mathsf{C}_{j}^{\mathcal{P}_{ij}}.\label{eq:cond1}
\end{equation}
Then there exists a unique quantity function of $n-r$ arguments
\[
\Psi:\left[1_{q}\right]\times\cdots\times\left[1_{q}\right]\rightarrow\left[1_{q}\right]
\]
such that if $x_{j}\neq0_{\mathsf{C}_{j}}$ for $j=1,\ldots,r$ then
\begin{gather}
\pi_{0}=\Psi\left(\pi_{1},\ldots,\pi_{n-r}\right),\label{eq:piteorem}
\end{gather}
where $\pi_{0}=y_{0}\left(\prod_{j=1}^{r}x_{j}^{\mathbf{\mathcal{P}}_{0j}}\right)^{-1}$,
$\pi_{k}=y_{k}\left(\prod_{j=1}^{r}x_{j}^{\mathbf{\mathcal{P}}_{\left(k+r\right)j}}\right)^{-1}$
for $k=1,\ldots.,n-r$, or equivalently
\begin{equation}
\Phi\left(x_{1},\ldots,x_{r},y_{1},\ldots,y_{n-r}\right)=\prod_{j=1}^{r}\nolimits\!x_{j}^{\mathcal{P}_{0j}}\,\Psi\left(\pi_{1},\ldots,\pi_{n-r}\right).\label{eq:pitheorem2}
\end{equation}
\end{thm}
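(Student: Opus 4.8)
The plan is to construct $\Psi$ directly from a covariant scalar representation $\upphi\colon K^{n}\to K$ of $\Phi$, and then to settle uniqueness by a short separate argument. First I would record two preliminaries. (i) From the dimensional hypothesis (\ref{eq:cond1}), each of $\pi_{0},\pi_{1},\ldots,\pi_{n-r}$ lies in $[1_{Q}]$; indeed $[\pi_{0}]=\mathsf{C}_{0}\bigl(\prod_{j=1}^{r}\mathsf{C}_{j}^{\mathcal{P}_{0j}}\bigr)^{-1}=[1_{Q}]$ and, for $k=1,\ldots,n-r$, $[\pi_{k}]=\mathsf{C}_{k+r}\bigl(\prod_{j=1}^{r}\mathsf{C}_{j}^{\mathcal{P}_{(k+r)j}}\bigr)^{-1}=[1_{Q}]$. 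Since $1_{Q}$ is a nonzero quantity of $[1_{Q}]$ and hence a unit quantity for it, the map $\hat\mu\colon[1_{Q}]\to K$ sending $p$ to the unique scalar with $p=\hat\mu(p)\cdot 1_{Q}$ is well defined. (ii) The uniqueness clause in the hypothesis forces $\mathcal{P}_{ij}=\delta_{ij}$ for $i=1,\ldots,r$.

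For existence I would fix any tuple $(x_{1},\ldots,x_{r},y_{1},\ldots,y_{n-r})$ with $x_{j}\neq 0_{\mathsf{C}_{j}}$ for all $j$, set $y_{0}=\Phi(x_{1},\ldots,x_{r},y_{1},\ldots,y_{n-r})$, and --- the one slightly clever move --- take $E=(x_{1},\ldots,x_{r})$ itself as the reference tuple in the definition of covariant representability. Because every $x_{j}$ is nonzero, each $\prod_{j=1}^{r}x_{j}^{\mathcal{P}_{ij}}$ is a nonzero quantity of $\mathsf{C}_{i}$, hence a unit quantity for $\mathsf{C}_{i}$, so the required unique $E$-expansions exist for all $i$. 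From these one gets $\mu_{E}(x_{i})=1$ for $i\le r$, and multiplying the $E$-expansions of $y_{k}$ and of $y_{0}$ by the appropriate inverse gives $\pi_{k}=\mu_{E}(y_{k})\cdot 1_{Q}$ and $\pi_{0}=\mu_{E}(y_{0})\cdot 1_{Q}$, that is, $\hat\mu(\pi_{k})=\mu_{E}(y_{k})$ and $\hat\mu(\pi_{0})=\mu_{E}(y_{0})$. I would then define $\psi\colon K^{n-r}\to K$ by $\psi(t_{1},\ldots,t_{n-r})=\upphi(1,\ldots,1,t_{1},\ldots,t_{n-r})$ (with $r$ ones) and $\Psi\colon[1_{Q}]\times\cdots\times[1_{Q}]\to[1_{Q}]$ by $\Psi(p_{1},\ldots,p_{n-r})=\psi(\hat\mu(p_{1}),\ldots,\hat\mu(p_{n-r}))\cdot 1_{Q}$. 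The covariance identity $\mu_{E}(\Phi(x_{1},\ldots,y_{n-r}))=\upphi(\mu_{E}(x_{1}),\ldots,\mu_{E}(y_{n-r}))$, together with $\mu_{E}(x_{i})=1$ and the two identities for $\pi_{k},\pi_{0}$, then gives $\pi_{0}=\Psi(\pi_{1},\ldots,\pi_{n-r})$, which is (\ref{eq:piteorem}); multiplying through by $\prod_{j=1}^{r}x_{j}^{\mathcal{P}_{0j}}$ yields the equivalent form (\ref{eq:pitheorem2}).

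For uniqueness, suppose $\Psi'$ also satisfies the conclusion. Given any $(p_{1},\ldots,p_{n-r})\in[1_{Q}]^{n-r}$, I would choose a nonzero $x_{j}\in\mathsf{C}_{j}$ for each $j$ (every dimension of $Q$ contains nonzero quantities, for instance suitable products of a basis of $Q$) and set $y_{k}=p_{k}\prod_{j=1}^{r}x_{j}^{\mathcal{P}_{(k+r)j}}$, so that $y_{k}\in\mathsf{C}_{k+r}$ and $\pi_{k}=p_{k}$. Applying the conclusion for $\Psi$ and then for $\Psi'$ to this tuple gives $\Psi(p_{1},\ldots,p_{n-r})=\pi_{0}=\Psi'(p_{1},\ldots,p_{n-r})$, whence $\Psi=\Psi'$.

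The bulk of the work will be routine index bookkeeping: tracking the exponent matrix $\mathcal{P}_{ij}$, confirming that $E=(x_{1},\ldots,x_{r})$ is an admissible reference tuple, and verifying $\pi_{k}=\mu_{E}(y_{k})\cdot 1_{Q}$. The conceptual point that makes it go through is that covariant representability supplies a single scalar function $\upphi$ valid for every admissible $E$ at once; specializing to $E=(x_{1},\ldots,x_{r})$ forces the first $r$ scalar arguments to equal $1$ and turns $\upphi$ into $\Psi$, so the usual ``is the induced map well defined?'' concern never has to be confronted directly --- $\upphi$ is pinned down before any choice of units. The only genuinely structural input, needed only in the uniqueness step, is that every dimension of $Q$ contains a nonzero, hence unit, quantity.
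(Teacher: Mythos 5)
Your proof is correct, and its crux is the same as the paper's: specialize the reference tuple in the covariance condition to $E=(x_{1},\ldots,x_{r})$ itself, so that $\mu_{E}(x_{j})=1$ and the single, $E$-independent function $\upphi$ collapses to a function of the last $n-r$ measures. The difference is in the packaging. The paper first works with an arbitrary admissible $E$, introduces $\nu_{E}(y_{k})=\mu_{E}(y_{k}p_{k}^{-1})$, proves this is independent of $E$, and passes through a chain of auxiliary functions $\phi$, $\omega^{-1}$, $\chi$ before specializing to $X=(x_{1},\ldots,x_{r})$; you go straight to $E=(x_{1},\ldots,x_{r})$ and define $\psi(t_{1},\ldots,t_{n-r})=\upphi(1,\ldots,1,t_{1},\ldots,t_{n-r})$ in one step, which is shorter and loses nothing, since covariance already guarantees that this $\psi$ does not depend on which nonzero $x_{j}$ you happen to be evaluating at. Your uniqueness argument is also slightly more complete than the paper's: the paper cancels the invertible factor $p_{0}$, which shows $\Psi$ and $\Psi'$ agree on tuples of the form $(\pi_{1},\ldots,\pi_{n-r})$, whereas you additionally verify that every tuple in $[1_{Q}]^{n-r}$ is realized in this form (by choosing nonzero $x_{j}$ and setting $y_{k}=p_{k}\prod_{j}x_{j}^{\mathcal{P}_{(k+r)j}}$), which is what one actually needs to conclude $\Psi=\Psi'$ as functions on all of $[1_{Q}]^{n-r}$. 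All the supporting facts you invoke (that $\mathcal{P}_{ij}=\delta_{ij}$ for $i\le r$ by the uniqueness clause, that products of nonzero quantities are nonzero and hence unit quantities, and that every dimension contains a nonzero quantity) are available in the quantity-space framework, so there is no gap.
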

\begin{proof}
Set $E=\left(e_{1},\ldots,e_{r}\right)$, where $0_{\mathsf{C}_{j}}\neq e_{j}\in\mathsf{C}_{j}$
for $j=1,\ldots r$. Then we have $\mathsf{C}_{i}=\prod_{j=1}^{r}\mathsf{C}_{j}^{\mathcal{P}_{ij}}=\prod_{j=1}^{r}\left[e_{j}\right]^{\mathcal{P}_{ij}}=\left[\prod_{j=1}^{r}e{}_{j}^{\mathcal{P}_{ij}}\right]$
for $i=0,1,\ldots,n$. $\prod_{j=1}^{r}e{}_{j}^{\mathcal{P}_{ij}}$
is non-zero and hence a unit quantity for $\mathsf{C}_{i}$, so for
$i=0,1,\ldots,n$ and every $q_{i}\in\mathsf{C}_{i}$ there is a unique
$\mu_{E}\left(q_{i}\right)\in K$ and unique integers $\mathcal{P}_{ij}$
such that
\begin{equation}
q_{i}=\mu_{E}\left(q_{i}\right)\cdot\prod_{j=1}^{r}\nolimits e{}_{j}^{\mathcal{P}_{ij}}.\label{eq:exp}
\end{equation}

For $j=1,\ldots,r$, $\mathsf{C}_{j}$ has the unique expansion $\mathsf{C}_{j}=\mathsf{C}_{j}^{1}$
so that $x_{j}=\mu_{E}\left(x_{j}\right)~\cdot~e_{j}$. Thus, we have
$\prod_{j=1}^{r}x{}_{j}^{\mathcal{P}_{ij}}=\prod_{j=1}^{r}\left(\mu_{E}\left(x_{j}\right)\cdot e_{j}\right)^{\mathcal{P}_{ij}}=\prod_{j=1}^{r}\mu_{E}\left(x_{j}\right)^{\mathcal{P}_{ij}}\cdot\prod_{j=1}^{r}e{}_{j}^{\mathcal{P}_{ij}}$
for $i=0,1,\ldots,n$, so $\mu_{E}\left(\prod_{j=1}^{r}x{}_{j}^{\mathcal{P}_{ij}}\right)=\prod_{j=1}^{r}\mu_{E}\left(x_{j}\right)^{\mathcal{P}_{ij}}$. 

It is fairly straightforward to verify that if $q\in\left[1_{Q}\right]$
then $q$ has the unique expansion $q=\mu_{E}\left(q\right)\cdot\prod_{j=1}^{r}\nolimits e{}_{j}^{0}$
relative to any $E$; in particular, $1_{Q}=1\cdot\prod_{j=1}^{r}\nolimits e{}_{j}^{0}$.
This implies that if $q_{i}=\mu_{E}\left(q_{i}\right)\cdot\prod_{j=1}^{r}\nolimits e{}_{j}^{\mathcal{P}_{ij}}$,
where $\mu_{E}\left(q_{i}\right)\neq0$ so that $q_{i}$ is invertible,
then $q_{i}^{-1}$ has the unique expansion $q_{i}^{-1}=\frac{1}{\mu_{E}\left(q_{i}\right)}\cdot\prod_{j=1}^{r}e{}_{j}^{-\mathcal{P}_{ij}}$
relative to $E$. As a further consequence, if $q_{i},r_{i}\in\mathsf{C}_{i}$
and $r_{i}$ is invertible, so that $q_{i}r_{i}^{-1}\in\left[1_{Q}\right]$,
then $q_{i}r_{i}^{-1}$ has the unique expansion $q_{i}r_{i}^{-1}=\mu_{E}\left(q_{i}r_{i}^{-1}\right)\cdot\prod_{j=1}^{r}\nolimits e{}_{j}^{0}$,
where $\mu_{E}\left(q_{i}r_{i}^{-1}\right)=\mu_{E}\left(q_{i}\right)/\mu_{E}\left(r_{i}\right)$,
relative to any $E$.

{\small{}Given $\mathcal{P}_{ij}$ by (}\ref{eq:cond1}{\small{}),
set} $p_{0}=\prod_{j=1}^{r}x_{j}^{\mathcal{P}_{0j}}$, $p_{k}=\prod_{j=1}^{r}x_{j}^{\mathcal{P}_{\left(k+r\right)j}}$
for $k=1,\ldots,n-r$, so that $\left[p_{0}\right]=\left[\prod_{j=1}^{r}\mu_{E}\left(x_{j}\right)^{\mathcal{P}_{0j}}\cdot\prod_{j=1}^{r}e{}_{j}^{\mathcal{P}_{0j}}\right]=\left[\prod_{j=1}^{r}e{}_{j}^{\mathcal{P}_{0j}}\right]=\left[y_{0}\right]$
and $\left[p_{k}\right]=\left[\prod_{j=1}^{r}\mu_{E}\left(x_{j}\right)^{\mathcal{P}_{\left(k+r\right)j}}\cdot\prod_{j=1}^{r}e{}_{j}^{\mathcal{P}_{\left(k+r\right)j}}\right]=\left[\prod_{j=1}^{r}e{}_{j}^{\mathcal{P}_{\left(k+r\right)j}}\right]=\left[y_{k}\right]$.
Also set $\nu_{E}\left(y_{k}\right)=\mu_{E}\left(y_{k}p_{k}^{-1}\right)$
for $k=0,1,\ldots,n-r$ when $p_{k}$ is invertible, so that 
\[
\nu_{E}\left(y_{k}\right)=\mu_{E}\left(y_{k}p_{k}^{-1}\right)=\mu_{E}\left(y_{k}\right)/\mu_{E}\left(p_{k}\right)
\]
relative to any $E$, since $y_{k}p_{k}^{-1}\in\left[1_{Q}\right]$.

It is convenient to denote the sequence of arguments $x_{1},\ldots,x_{r},y_{1},\ldots,y_{n-r}$
by $\boldsymbol{q}$, and the sequence $\mu_{E}\left(x_{1}\right),\ldots,\mu_{E}\left(x_{r}\right),\mu_{E}\left(y_{1}\right),\ldots,\mu_{E}\left(y_{n-r}\right)$
by $\tau_{E}\left(\boldsymbol{q}\right)$. By \linebreak{}
assumption, there is a function $\upphi:K^{n}\rightarrow K$ such
that $\mu_{E}\left(\Phi\left(\boldsymbol{q}\right)\right)=\upphi\left(\tau_{E}\left(\boldsymbol{q}\right)\right)$
for any $\boldsymbol{q}$ and $E$, so as $x_{j}\neq0_{\mathsf{C}_{j}}$
for $j=1,\ldots r$ there is a function $\phi:K^{n}\mapsto K$ such
that for any $\boldsymbol{q}$ and $E$ we have
\begin{gather*}
\mu_{E}\left(\Phi\left(\boldsymbol{q}\right)p_{0}^{-1}\right)=\frac{\mu_{E}\left(\Phi\left(\boldsymbol{q}\right)\right)}{\mu_{E}\left(p_{0}\right)}=\frac{\mu_{E}\left(\Phi\left(\boldsymbol{q}\right)\right)}{\mu_{E}\left(\prod_{j=1}^{r}x_{j}^{\mathcal{P}_{0j}}\right)}=\frac{\upphi\left(\tau_{E}\left(\boldsymbol{q}\right)\right)}{\prod_{j=1}^{r}\mu_{E}\left(x_{j}\right)^{\mathcal{P}_{0j}}}=\phi\left(\tau_{E}\left(\boldsymbol{q}\right)\right).
\end{gather*}

Furthermore, there is, for given $\mathcal{P}_{ij}$, a bijection
between scalar sequences
\begin{gather*}
\omega:\tau_{E}\left(\boldsymbol{q}\right)\:\longmapsto\:\mu_{E}\left(x_{1}\right),\ldots,\mu_{E}\left(x_{r}\right),\frac{\mu_{E}\left(y_{1}\right)}{\prod_{j=1}^{r}\mu_{E}\left(x_{j}\right)^{\mathcal{P}_{\left(1+r\right)j}}},\ldots,\frac{\mu_{E}\left(y_{n-r}\right)}{\prod_{j=1}^{r}\mu_{E}\left(x_{j}\right)^{\mathcal{P}_{nj}}}\:,
\end{gather*}
where $\mu_{E}\left(y_{k}\right)/\prod_{j=1}^{r}\mu_{E}\left(x_{j}\right)^{\mathcal{P}_{\left(k+r\right)j}}=\mu_{E}\left(y_{k}\right)/\mu_{E}\left(p_{k}\right)=\nu_{E}\left(y_{k}\right)$
for $k=1,\ldots,n-r$, so there is a function $\chi=\phi\circ\omega^{-1}:K^{n}\rightarrow K$
such that
\[
\nu_{E}\left(y_{0}\right)=\phi\left(\tau_{E}\left(\boldsymbol{q}\right)\right)=\chi\left(\mu_{E}\left(x_{1}\right),\ldots,\mu_{E}\left(x_{r}\right),\nu_{E}\left(y_{1}\right),\ldots,\nu_{E}\left(y_{n-r}\right)\right).
\]

Now set $X=\left(x_{1},\ldots,x_{r}\right)$. By assumption $x_{j}\neq0_{\mathsf{C}_{j}}$
for $j=1,\ldots,r$, so any $q_{i}\in\mathsf{C}_{i}$ has a unique
expansion of the form (\ref{eq:exp}) relative to both $E$ and $X$,
and $\nu_{E}\left(y_{k}\right)=\nu_{X}\left(y_{k}\right)$ for $k=0,1,\ldots,n-r$
since $\mu_{E}\left(y_{k}p_{k}^{-1}\right)$ does not depend on $E$.

There is thus a function $\psi:K^{n-r}\rightarrow K$ such that
\begin{gather*}
\chi\left(\mu_{E}\left(x_{1}\right),\ldots,\mu_{E}\left(x_{r}\right),\nu_{E}\left(y_{1}\right),\ldots,\nu_{E}\left(y_{n-r}\right)\right)=\nu_{E}\left(y_{0}\right)=\\
\nu_{X}\left(y_{0}\right)=\chi\left(1,\ldots,1,\nu_{X}\left(y_{1}\right),\ldots,\nu_{X}\left(y_{n-r}\right)\right)=\psi\left(\nu_{X}\left(y_{1}\right),\ldots,\nu_{X}\left(y_{n-r}\right)\right)
\end{gather*}
for any $E$, since $x_{j}=1\cdot x_{j}$ so that $\mu_{X}\left(x_{j}\right)=1$
for $j=1,\ldots,r$. 

$1_{Q}$ is a unit quantity for $\left[1_{Q}\right]$, so we can define
a regular quantity function of $n-r$ arguments 
\[
\Psi:\left[1_{q}\right]\times\cdots\times\left[1_{q}\right]\rightarrow\left[1_{q}\right]
\]
 by setting
\[
\Psi\left(\nu_{1}\cdot1_{Q},\ldots,\nu_{n-r}\cdot1_{Q}\right)=\psi\left(\nu_{1},\ldots,\nu_{n-r}\right)\cdot1_{Q}.
\]
Then we have
\[
\nu_{X}\left(y_{0}\right)\cdot1_{Q}=\Psi\left(\nu_{X}\left(y_{1}\right)\cdot1_{Q},\ldots,\nu_{X}\left(y_{n-r}\right)\cdot1_{Q}\right)
\]
since $\nu_{X}\left(y_{0}\right)=\psi\left(\nu_{X}\left(y_{1}\right),\ldots,\nu_{X}\left(y_{n-r}\right)\right)$.

Finally, each $p_{k}$ is non-zero and invertible, so
\begin{gather*}
\nu_{X}\left(y_{k}\right)\cdot1_{Q}=\frac{\mu_{X}\left(y_{k}\right)}{\mu_{X}\left(p_{k}\right)}\cdot p_{k}p_{k}^{-1}=\left(\mu_{X}\left(y_{k}\right)\cdot p_{k}\right)\left(\frac{1}{\mu_{X}\left(p_{k}\right)}\cdot p_{k}^{-1}\right)=\\
\left(\mu_{X}\left(y_{k}\right)\cdot p_{k}\right)\left(\mu_{X}\left(p_{k}\right)\cdot p_{k}\right)^{-1}=y_{k}p_{k}^{-1}
\end{gather*}
for $k=0,1,\ldots,n-r$, since $y_{k}=\mu_{X}\left(y_{k}\right)\cdot p_{k}$
and $p_{k}=\mu_{X}\left(p_{k}\right)\cdot p_{k}$ are expansions of
$y_{k}$ and $p_{k}$ relative to $X$. Thus,
\[
y_{0}p_{0}^{-1}=\Psi\left(y_{1}p_{1}^{-1},\ldots,y_{n-r}p_{n-r}^{-1}\right),
\]
or, using the notation $\pi_{k}=y_{k}p_{k}^{-1}$ as in the statement
of the theorem, 
\[
\pi{}_{0}=\Psi\left(\pi{}_{1},\ldots,\pi{}_{n-r}\right).
\]

We have thus shown the existence of a representation of $\Phi$ of
the form (\ref{eq:piteorem}) or (\ref{eq:pitheorem2}). Also, $p_{0}\Psi\left(\pi{}_{1},\ldots,\pi{}_{n-r}\right)=p_{0}\Psi'\left(\pi{}_{1},\ldots,\pi{}_{n-r}\right)$
implies $\Psi\left(\pi{}_{1},\ldots,\pi{}_{n-r}\right)=\Psi'\left(\pi{}_{1},\ldots,\pi{}_{n-r}\right)$
since $p_{0}$ is invertible, so $\Psi$ is unique.
\end{proof}
In this proof, $E$ is not assumed to be a finite basis $\mathscr{E}$
for $Q$, and the assumption that there exists a finite basis for
$Q$ is used only indirectly. This would seem to facilitate the generalization
of Theorem \ref{prop:pi} to other commutative scalable monoids over
a field than finitely generated quantity spaces. 

The condition that $x_{1},\ldots,x_{r}$ are non-zero quantities is
natural and necessary; note that $y_{0},y_{1},\ldots,y_{n-r}$ are
not restricted. It is often assumed in connection with the $\pi$
theorem that quantities are positive (or have positive measures).
This presupposes that $K$ is an ordered field such as the real numbers,
but the present representation theorem holds for any field, for example,
the complex numbers.

$Q$ has no zero divisors, so (\ref{eq:pitheorem2}) implies that
$\Phi\left(x_{1},\ldots,x_{r},y_{1},\ldots y_{n-r}\right)=0_{\mathsf{C}_{0}}$
if and only if $\Psi\left(\pi_{1},\ldots,\pi_{n-r}\right)=0_{\left[1_{Q}\right]}$,
given that $x_{j}\neq0_{\mathsf{C}_{j}}$ for $j=1,\ldots,r$ so that
$p_{0}\neq0_{\mathsf{C}_{0}}$. This is analogous to the form of the
$\pi$ theorem given by Vaschy \cite{key-8} and Buckingham \cite{key-2},
whereas Federman \cite{key-3} proved an identity of the form (\ref{eq:pitheorem2}).
(In hindsight, these early contributions have flaws, but this was
pioneering work.)

It follows from (\ref{eq:pitheorem2}) that if $\Phi$ is a coregular
quantity function and $\Psi\left(\pi_{1},\ldots,\pi_{n-r}\right)$
is invariant under the transformation $x_{j}\mapsto\lambda\cdot x_{j}$
then
\[
\Phi\left(x_{1},\ldots,\lambda\cdot x_{j},\ldots,x_{r},y_{1},\ldots,y_{n-r}\right)=\lambda^{\mathcal{P}_{0j}}\cdot\Phi\left(x_{1},\ldots,x_{r},y_{1},\ldots,y_{n-r}\right)
\]
for non-zero $\lambda,x_{1},\ldots,x_{r}$. Identities of this form
are scaling laws.

Consider a coregular quantity function 
\[
\Phi:\mathsf{C}_{1}\times\cdots\times\mathsf{C}_{n}\rightarrow\mathsf{C}_{0},\qquad\left(x_{1},\ldots,x_{r},y_{1},\ldots,y_{n-r}\right)\mapsto y_{0}
\]
on $Q$ such that each $\mathsf{C}_{i}$ has a unique expansion $\mathsf{C}_{i}=\prod_{j=1}^{r}\nolimits\!\mathsf{C}_{j}^{\mathcal{P}_{ij}}$.
If $\mathsf{D}_{k}\in Q/{\sim}$, $\mathcal{P}_{k}>0$ and $\mathsf{D}_{k}^{\mathcal{P}_{k}}=\mathsf{\mathsf{C}}_{k}$
for $k=0,1,\ldots,n-r$, so that $\Phi$ can be expressed as
\[
\Phi:\mathsf{C}_{1}\times\cdots\times\mathsf{C}_{r}\times\mathsf{D}_{1}^{\mathcal{P}_{1}}\times\cdots\times\mathsf{D}_{n-r}^{\mathcal{P}_{n-r}}\rightarrow\mathsf{D}_{0}^{\mathcal{P}_{0}}
\]
then $\Phi\left(x_{1},\ldots,x_{r},z_{1}^{\mathcal{P}_{1}},\ldots,z_{n-r}^{\mathcal{P}_{n-r}}\right)=z_{0}^{\mathcal{P}_{0}}$,
where $z_{k}\in\mathsf{D}_{k}$, since $\left[z_{k}\right]^{\mathcal{P}{}_{k}}=\left[z_{k}^{\mathcal{P}{}_{k}}\right]$,
and by Theorem \ref{prop:pi} there exists a unique quantity function
of $n-r$ arguments
\[
\Psi:\left[1_{q}\right]\times\cdots\times\left[1_{q}\right]\rightarrow\left[1_{q}\right]
\]
such that if $x_{j}\neq0_{\mathsf{C}_{j}}$ for $j=1,\ldots,r$ then
\begin{gather}
\pi_{0}=\Psi\left(\pi_{1},\ldots,\pi_{n-r}\right),\label{eq:pitheorem3}
\end{gather}
where $\pi_{0}=z_{0}^{\mathcal{P}_{0}}\left(\prod_{j=1}^{r}x_{j}^{\mathbf{\mathcal{P}}_{0j}}\right)^{\!-1}$,
$\pi_{k}=z_{k}^{\mathcal{P}_{k}}\left(\prod_{j=1}^{r}x_{j}^{\mathbf{\mathcal{P}}_{\left(k+r\right)j}}\right)^{\!-1}$
for $k=1,\ldots.,n-r$, or equivalently
\begin{equation}
\Phi\left(x_{1},\ldots,x_{r},z_{1}^{\mathcal{P}_{1}},\ldots,z_{n-r}^{\mathcal{P}_{n-r}}\right)^{\mathcal{P}{}_{0}}=\prod_{j=1}^{r}\nolimits\!x_{j}^{\mathcal{P}_{0j}}\,\Psi\left(\pi_{1},\ldots,\pi_{n-r}\right).\label{eq:piTheorem-3}
\end{equation}
Relation (\ref{eq:piTheorem-3}) is a more generally expressed and
often more useful form of (\ref{eq:pitheorem2}).

In practice, dimensional analysis usually starts from a dimensional
matrix showing how the dimensions of the quantities to be related
by an expression of the form (\ref{eq:piteorem}), (\ref{eq:pitheorem2}),
(\ref{eq:pitheorem3}) or (\ref{eq:piTheorem-3}) are expressed as
products of powers of dimensions of certain base units. How to prepare
data in the form of a dimensional matrix so that we can apply (\ref{eq:piTheorem-3})
is explained in \cite{key-5}. In addition, several examples of dimensional
analysis from beginning to end are given in \cite{key-5,key-6}.

The application of Theorem \ref{prop:pi} to dimensional analysis
is based on the premise that the quantity function posited is covariantly
representable. This is, in fact, a theoretical assumption about the
equivalence of reference frames. An assumption of this kind, together
with others, can be used to derive ''laws of nature'', much as,
for example, the assumption in classical mechanics of the equivalence
of different Galilean reference frames, the Galilean principle of
relativity.


\begin{thebibliography}{1}
\bibitem{key-1}de Boer, J. (1994). On the history of quantity calculus
and the international system, \emph{Metrologia}, \textbf{31}, 405\textendash 429.

\bibitem{key-2}Buckingham, E. (1914). On physically similar systems:
illustrations of the use of dimensional equations\emph{. Physical
Review}, \textbf{4} (4), 345\textendash 376.

\bibitem{key-3}Federman, A. (1911). On some general methods of integration
of first-order partial differential equations. \emph{Proceedings of
the Saint-Petersburg polytechnic institute. Section of technology,
natural science and mathematics}, \textbf{16} (1), 97\textendash 155.
(In Russian.)

\bibitem{key-4}Fourier, J. (1822). \emph{Théorie analytique de la
chaleur}. Paris. 

\bibitem{key-5}Jonsson, D. (2014). Quantities, Dimensions and Dimensional
Analysis. arXiv:1408.5024. 

\bibitem{key-6}Jonsson, D. (2014). Dimensional Analysis: A Centenary
Update. arXiv:1411.2798.

\bibitem{key-7}Jonsson, D. (2019). Magnitudes Reborn: Quantity Spaces
as Scalable Monoids. arXiv:1911.07236.

\bibitem{key-8}Vaschy, A. (1892). Sur les lois de similitude en physique.
\emph{Annales Télégraphiques}, \textbf{19}, 25\textendash 28.
\end{thebibliography}
\end{document}